\documentclass[12pt,reqno]{article}
mm \textheight=8.9in

\usepackage{amssymb}
\usepackage{amsmath}
\usepackage{young}
\usepackage{amsthm}
\usepackage{tikz-cd}
\usepackage{float}

\newcommand{\wt}{\mathrm{wt}}
\newcommand{\hgt}{\mathrm{ht}}
\newcommand{\lv}{\mathrm{lv}}
\newcommand{\id}{\mathrm{id}}

\newcommand{\Lc}{\mathcal{L}}

\newcommand{\Ac}{\mathcal{A}}

\newcommand{\Bc}{\mathcal{B}}

\newcommand{\sst}{\scriptstyle }

\newcommand{\Hg}{\mathfrak{H}}

\renewcommand{\d}{\mathfrak{d}}

\newcommand{\Ru}{\mathcal{R}}

\newcommand{\C}{\mathbb{C}}
\newcommand{\Z}{\mathbb{Z}}

\newcommand{\tp}{\otimes}

\newcommand{\V}{V}

\newcommand{\gm}{\gamma}
\newcommand{\dt}{\delta}

\newcommand{\la}{\lambda}

\newcommand{\End}{\mathrm{End}}

\newcommand{\Rm}{\mathrm{R}}

\newcommand{\btl}{\mbox{\raise1.1pt\hbox{$\scriptstyle\blacktriangleleft$}}}

\newcommand{\La}{\Lambda}

\newcommand{\g}{\mathfrak{g}}
\renewcommand{\b}{\mathfrak{b}}

\newcommand{\h}{\mathfrak{h}}

\newcommand{\eps}{\epsilon}

\newcommand{\al}{\alpha}

\newcommand{\bt}{\beta}

\newcommand{\be}{\begin{eqnarray}}
\newcommand{\ee}{\end{eqnarray}}

\newtheorem{thm}{Theorem}[section]
\newtheorem{propn}[thm]{Proposition}
\newtheorem{lemma}[thm]{Lemma}

\newtheorem{corollary}[thm]{Corollary}

\theoremstyle{definition}

\newtheorem{definition}[thm]{Definition}

\begin{document}
\title{R-matrix via Hasse diagrams}

\author{
N. Kryazhevskikh, A. Mudrov, and V. Stukopin.
 \vspace{10pt}\\
\small
 Moscow Institute of Physics and Technology,\\
\small
9 Institutskiy per., Dolgoprudny, Moscow Region,
141701, Russia,
\vspace{10pt}\\
\small
 e-mail: kriazhevskikh.nv@phystech.edu,  mudrov.ai@mipt.ru
}
\date{ }

\maketitle
\begin{abstract}
We calculate the R-matrix for  the exceptional Lie superalgebra $\d(2,1;\kappa)$
in the smallest representation of its quantum supergroup, using
a method of Hasse diagrams.
  \end{abstract}
\begin{center}
\end{center}
{\small \underline{Key words}:  R-matrix, Hasse diagrams,  exceptional Lie superalgebra}
\\
{\small \underline{AMS classification codes}: 17B10, 17B37}

\section{Introduction}
The quantum Yang-Baxter equation \cite{B} is a corner stone of quantum (super)groups discovered within the Lenningrad school \cite{FRT} and
developed into an accomplished mathematical theory by Drinfeld \cite{D1}. A solution to this equation called quasitriangular structure (universal R-matrix $\Ru$)
singles out an important class of quantized universal enveloping algebras of the underlying Lie algebras.
For a particular application, it is desirable to know its image in a finite dimensional representation.
A straightforward approach to get this image is to use an explicit factorizable formula for $\Ru$ obtained by several authors \cite{KR,KT,LS,Y1}.
On the other hand, that can be done, in some relatively simple cases, directly by an alternative method of Hasse diagrams. Such diagrams can be  associated with representations
of the Borel subalgebras constituting the triangular decomposition of the quantum group. It turns out that the method works, in principle, for every module of highest or lowest weight. It proves to be  computationally
efficient for finite dimensional modules, with the use of a computer algebra software like Maple.
In particular, it is good for the finite dimensional module that deforms the adjoint representation.

The method is utilizing the intertwining properties of the universal R-matrix in its polarized form, when
 $\Ru$ is localized in the $\hbar$-adic completion  of $U_q(\b_+)\tp U_q(\b_-)\subset U_q(\g)\tp U_q(\g)$.
Here $\b_\pm\subset \g$ are opposite Borel subalgebras in $\g$. Such a polarization is featured
in the so called standard quantization of $U(\g)$. More precisely, it is required that the R-matrix is presentable as a product $\Ru=q^{\sum_{i} h_i\tp h_i} \check{\Ru}$,
where $\{h_i\}$ is an orthonormal basis in the Cartan subalgebra $\h\subset \b_\pm$ and $\check{\Ru}\in U_q(\g_+)\tp U_q(\g_-)$.
The cutoff $\check{\Ru}$ intertwines two comultiplications in $U_q(\g)$, which identity in a one-leg representation  delivers a recursion base for the  computation algorithm.

Every module whose weights are bounded from above admits a  procedure that enables one to calculate matrix entries of $\Lc=(\pi\tp \id)(\check{\Ru})\in \End(V)\tp U_q(\b_-)$ recursively
by induction on their weight height. In general, such a recursion requires a change of basis in each step depending on a simple root (labeling an arrow of Hasse diagrams).
What is good for a module of highest weight, there is a basis which can be chosen once and for all and which simplifies the algorithm.
As a consequence, the following uniqueness theorem can be proved: for a finite dimensional $V$ of highest/lowest there exists
 a unique, up to a scalar multiple, element $L$ that features the aforementioned intertwining properties.
Sending the right tensor leg of $\Lc$ to $\End(V)$ give and subsequent multiplication by the cut off Cartan factor give the
required R-matrix. In the case of module of with lowest weight, the similar schemes applies,  with  the roles of $U_q(\g_\pm)$ obviously interchanged.

We illustrate the algorithm on the example of the quantum supergroup $U_q(\g)$ with $\g$ being the exceptional Lie superalgebra $\d(2,1;\kappa)$.
This superalgebra has diverse  applications in superconformal mechanics \cite{CHT, IKL, FIL, KL}, supergravity \cite{S},  strings \cite{BI}, and condensed matter physics
\cite{Q}.

For each of the four non-isomorphic triangular decompositions of $\g$, we calculate the R-matrix in the representation of
minimal dimension 17, which is the quantized adjoint representation of $\d(2,1;\kappa)$. Our result gives an explicit expression of $R$,
in contrast with \cite{LGT}, where $R$ is expressed through a set of five invariant projectors.

\section{Quantum supergroups}
Let $\g$ be a finite-dimensional complex  Lie superalgebra associated with a polarized root system $\Rm=\Rm^-\cup \Rm^+$ of rank $n$  with a basis $\Pi=\{\alpha_i: i \in[1,n]=I\}\subset \Rm^+$ of the simple roots and even Cartan subalgebra $\h\subset \g$. Let  $A=(a_{ij})_{1\leq i,j\leq n}$, denote the symmetrizable Cartan matrix and $(-,-)$ the corresponding
non-degenerate symmetric bilinear form on the dual vector space $\h^*$.

Fix  $q\in \C^\times$ to be not a root of unity. Define
$$[x]_q=\frac{q^x-q^{-x}}{\omega}$$
for an indeterminate $x\in \C$,
where $\omega=q-q^{-1}\not =0$ will be used  throughout the text.

For each simple root $\al\in \Pi$ define
$$
 q_{\alpha}=\begin{cases}
 q, & \mbox{if } (\alpha,\alpha)=0, \\
 q^\frac{(\alpha,\alpha)}{2}, & \mbox{if } (\alpha,\alpha)\neq0.
 \end{cases}
$$

\begin{definition}\cite{KT}
  The quantum supergroup $U_q (\g)$ is a complex unital associative superalgebra   generated by
 $ e_{\pm\alpha_i},$ and $q^{\pm h_{\alpha_i}}$ with grading
$$|e_{\pm{\alpha_i}}|=\begin{cases}
 0, & \mbox{if } i\in \varkappa\subset I  \\
 1, & \mbox{if } i\notin \varkappa.
 \end{cases},
 \quad
|q^{\pm h_{\alpha_i}}|=0,\ \forall\ i\in I,
$$ such that the following relations are satisfied:
\begin{enumerate}
  \item [(i)]	$q^{h_{\alpha_i} } q^{-h_{\alpha_i} }=q^{-h_{\alpha_i} } q^{h_{\alpha_i} }=1,$ $ q^{h_{\alpha_j} } q^{h_{\alpha_i} }=q^{h_{\alpha_i}} q^{h_{\alpha_j} }$,
\item [(ii)]	$q^{h_{\alpha_i} }  e_{\pm{\alpha_j} }  q^{-h_{\alpha_i} }=q^{\pm(\alpha_i,\alpha_j ) } e_{\pm\alpha_j }$,

\item [(iii)] $[e_{\alpha_i },e_{-\alpha_j}]=e_{\alpha_i }e_{-\alpha_j}-(-1)^{|e_{-\alpha_j }||e_{\alpha_i }|}e_{-\alpha_j }e_{\alpha_i }=\delta_{ij}[h_{\al_i}]_{q_{\al_i}}$,
\item [(iv)]  $e_{\pm \al_i}^2=0$ if $(\al_i,\al_i)=0$,
\item [(v)] $(ad_{q'} e_{\pm{\alpha_i}})^{v_{ij}} e_{\pm{\alpha_j}}=0,\quad i\neq j,\ q'=q,  q^{-1},$ where
 $$(ad_{q'} e_{\al_i}) x=e_{\al_i}x-(-1)^{|e_{\al_i}||x|}(q')^{(\alpha_i,wt(x))}x e_{\al_i},\quad x\in U_q(\g),$$
 $$ v_{ij}=\begin{cases}
                 1, & \mbox{if } (\alpha_i,\alpha_i)=(\alpha_i,\alpha_j)=0, \\
                 2, & \mbox{if } (\alpha_i,\alpha_i)=0,\ (\alpha_i,\alpha_j)\neq0,  \\
                 1-\frac{2(\alpha_i,\alpha_j)}{(\alpha_i,\alpha_i)} , & \mbox{if }(\alpha_i,\alpha_i)\neq0.
               \end{cases}$$
 \end{enumerate}
\end{definition}
\noindent
The left equality in (iii) reminds that the commutator is understood in the graded sense, while the right one imposes a relation of $U_q(\g)$.
There may be also additional Serre relations involving more than two simple root vectors \cite{Y2}. There exact form is not important for
this exposition.

In particular, (iv) implies (v) for  $j\not =i$ such that $(\al_i,\al_j)\not =0$.
If $(\al_i,\al_j)=0$, then (v) translates to $e_{\pm\al_i}e_{\pm\al_j}=-e_{\pm\al_j}e_{\pm\al_i}$.

A Hopf superalgebra structure on $U_q(\g)$ is fixed by  comultiplication defined on the generators
$e_i=e_{\alpha_i }$, $f_i=e_{-\alpha_i }$, and $q^{\pm h_i}=q^{\pm h_{\al_i}}$ as
$$
\Delta(e_i)= 1\otimes e_i+e_i\otimes q^{h_i } ,\quad \Delta(f_i)=f_i\otimes 1+ q^{-h_i }\otimes f_i,\quad \Delta(q^{\pm h_i })=q^{\pm h_i}\otimes q^{\pm h_i}.
$$
Mind that the tensor product is supercommutative: $(a\tp b)(c\tp d)=(-1)^{|b||c|}ac\tp bd$ for all homogeneous $b$ and $c$.

The counit $\eps$ is a homomorphism $U_q(\g)\to \C$ that vanishes on all $e_i$, $f_i$ and returns $1$ on $q^{\pm h_i}$.
The antipode $\gamma$ can be readily evaluated on the generators:
$$ \gamma(q^{\pm h_{i} })=q^{\mp h_{i} },\quad  \gamma(e_{i})=-e_{i}q^{-h_{i} },\quad \gamma(f_{i})=-q^{h_{i} }f_{i}.$$
It is extended as a graded anti-automorphism to the entire $U_q(\g)$.

We denote by  $U_q(\h),\ U_q(\g_+),\ \mathrm{and}\ U_q(\g_-)$ the $\Z_2$-graded $\C$-subalgebras of $U_q (\g)$ generated by $q^{\pm h_{i} },\ e_{i},$ and $f_{i}$, respectively.

\section{Hasse diagrams and quantum L-operators}
Our approach to the problem can be conveniently formulated in  the language of Hasse diagrams.
Recall that such  diagrams are associated with every partially ordered sets and vice versa. The diagrams considered   in this paper
are related with posets arising from   $U_q(\h)$-diagonalizable $U_q(\b_-)$-modules. For other applications of Hasse diagrams, see
\cite{MS1,MS2}.

The algebra $U_q(\g_-)$ is endowed with a $\Z$-grading by setting $\deg(f_\al)=1$, $\al \in \Pi$.
Let $V$ be a $U_q(\g)$-module of highest weight.   We construct the Hasse diagram $\Hg(V)=\Hg_-(V)$ as follows.
Pick up a weight basis $\{v_i\}_{i\in I}$ in $V$; this will be the set of nodes. We will also identify the nodes with elements of the index set $I$.
Denote by $\pi$ the representation homomorphism $U_q(\g)\to \End(V)$. Arrows in $\Hg(V)$ are simple roots $\al\in \Pi$; we set $j\stackrel{\al}{\longleftarrow} i$ if $\pi_{ji}(f_\al)\not =0$. It follows
that the weight  $\nu_i$ and $\nu_j$ of the nodes satisfy the equality $\nu_i-\nu_j=\al$.
We write $j\prec i$ if the nodes $i$ and $j$ can be connected by a sequence of arrows from $i$ to $j$ and call such sequence a path.
Any ordered sequence of nodes $j=i_0\prec i_1\prec\ldots \prec i_k=i$ is called a route from $i$ to $j$.


Thus arrows of the  Hasse diagram $\Hg(V)$ are coloured by simple roots. It may happen that more then one $\al$-arrows  terminate at a node $v$.
Of course, it is possible only if $\dim V[\nu+\al]>1$, where $\nu$ is the weight of $v$.
Then we say that $v$ has in-$\al$ branching. Similarly, if the number of $\al$-arrows originating at $v$ is greater than 1, we say that $v$ has out-$\al$ branching,
 see the figures below.
That may occur  only if $\dim V[\nu-\al]>1$.
\begin{figure}[h]
\begin{center}
\begin{picture}(140,50)
\put(0,20){\circle*{3}}
\put(44,-2){{\vector(-2,1){40}}}
\put(45,20){{\vector(-1,0){40}}}
\put(44,42){{\vector(-2,-1){40}}}

\put(-2,23){$v$}

\put(20,23){$\al$}\put(20,37){$\al$}\put(20,-1){$\al$}

\put(140,20){\circle*{3}}
\put(136,22){{\vector(-2,1){40}}}
\put(135,20){{\vector(-1,0){40}}}
\put(136,18){{\vector(-2,-1){40}}}

\put(138,23){$v$}

\put(100,23){$\al$}\put(110,37){$\al$}\put(110,-1){$\al$}
\end{picture}
\end{center}
  \caption{in-  and out-branching}
\label{Hasse}
\end{figure}
\begin{definition}
We say that an arrow $\al$ originating from a node $v$ is simple if there is no $\al$-out branching at $v$.
In other words, it is the only $\al$-arrow originating from $v$.
\end{definition}
\noindent
In a descending recursive algorithm which  we explicate below we mean by branching only the "out"-variant while ignoring the "in"-variant
as immaterial.
\begin{definition}
  We say that a module $V$ is branching free if there is a weight basis in $V$ such that its every arrow
 $\Hg(\V)$ is simple.
\end{definition}
For instance, a minuscule module (whose all weights are multiplicity free) is branching free.  Another example is  an adjoint
module for semi-simple Lie (super)algebra, see for instance Figures \ref{Hasse1} and \ref{Hasse4}.
The only possibility for branching at positive root vectors is due to the multiplicity of the zero weight equal to the rank of $\g$.
One can choose a  basis in the Cartan subalgebra, whose elements are the images of positive $\al$-root vectors via the action of negative $\al$-root vectors.
Supplemented with root vectors, they constitute a basis in $V$ without  branching.

Being branching free for a module $V$ facilitates calculation of the R-matrix, however this requirement turns out to be  too restrictive.
\begin{definition}
Suppose that $V$ has a basis $\{v_i\}_{i\in I}$ in which
every non-maximal $v_i$ is covered by   a simple arrow $v_i\stackrel{\al}{\longleftarrow} \hat v_i$.
Then $V$ is called  branching quasi free. The nodes $\hat v_i$ are called parent for $v_i$.
\end{definition}
Thus, a node may have more than one parent nodes, but a parent node has only one descendent via a given simple root.

We will assume that all weights from $\La(V)$ belong to $\la-\Gamma_+$ for some $\la\in \La(V)$, as that is the case when $V$ is of highest weight. Then we can define a function $\lv\colon \La(V)\to \Z_-$ by assigning
$\lv(\la)=0$ and $\lv(\nu)=-\hgt(\la-\nu)$. That is,  $\lv(\nu)$ is the sum of no-negative integer coordinates in the expansion of $\la-\nu$ over the basis $\Pi$.
For a node $v$ of weight $\nu$ we will write $\lv(v)=\lv(\nu)$.

\begin{propn}
 Every module of highest weight is branching quasi-free.
\end{propn}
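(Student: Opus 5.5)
We need a weight basis of $V$ in which every non-highest basis vector $v_i$ has a parent: a node $\hat v_i$ with $\pi(f_\al)\hat v_i$ having a nonzero coefficient at $v_i$, such that $\hat v_i$ has no other $\al$-arrow, i.e. $\pi(f_\al)\hat v_i$ is proportional to $v_i$. The plan is to build the basis weight space by weight space, by induction on the level $\lv$, starting from the highest weight vector $v_\la$ at level $0$.

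\textbf{Induction hypothesis.} All weight spaces $V[\mu]$ with $\lv(\mu)>-m$ already have bases, and each non-maximal basis vector there has a parent.

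\textbf{Spanning step at level $-m$.} Fix a weight $\nu$ with $\lv(\nu)=-m$. Since $V=U_q(\g_-)v_\la$ and $U_q(\g_-)$ is generated by the $f_\al$, the weight space $V[\nu]$ is spanned by the vectors $\pi(f_\al)V[\nu+\al]$ over $\al\in\Pi$. Each $V[\nu+\al]$ lies at level $-m+1$, so it has an already constructed basis. Hence $V[\nu]$ is spanned by the finite set
$$S_\nu=\{\pi(f_\al)u\ :\ \al\in\Pi,\ u \mbox{ a basis vector of } V[\nu+\al] \mbox{ with } \pi(f_\al)u\neq 0\}.$$

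\textbf{Choice of basis at level $-m$.} Extract from $S_\nu$ a linearly independent subset that still spans $V[\nu]$, and declare it the basis of $V[\nu]$. Every basis vector $v$ of $V[\nu]$ then has the form $v=\pi(f_\al)u$ for some basis vector $u$ one level up. Expanded in the chosen basis of $V[\nu]$, the vector $\pi(f_\al)u$ is $v$ itself, with coefficient $1$ at $v$ and $0$ elsewhere. So $u$ has exactly one $\al$-arrow, namely $u\stackrel{\al}{\longrightarrow}v$. The arrow is simple, $u$ is a parent of $v$, and the induction proceeds.

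\textbf{Where care is needed.} The main point to check is that later choices do not break earlier parent relations. Simplicity of the arrow $u\to v$ depends only on the expansion of $\pi(f_\al)u$ in the basis of $V[\nu]$, and that basis is fixed once at level $-m$ and never revised. Two further points:
\begin{itemize}
\item Other basis vectors $u'$ of $V[\nu+\al]$ may have $\al$-out branching, because $\pi(f_\al)u'$ can be a combination of several basis vectors. This is allowed: quasi-freeness only requires one simple covering arrow per non-maximal node.
\item A single $u$ can be a parent for at most one descendant via each root $\al$, consistent with the remark after the definition.
\end{itemize}
Finally, weights of $V$ lie in $\la-\Gamma_+$ and every $\nu$ has finite level, so the induction exhausts all of $V$. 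Finite-dimensionality of each weight space, which holds for highest weight modules, guarantees that each step is finite.
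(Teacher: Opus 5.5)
Your proposal is correct and is essentially the paper's own argument. It runs a descending induction on the level, uses cyclicity of $V$ over $U_q(\g_-)$ to see that each weight space is spanned by the vectors $f_\al u$ with $u$ in the already constructed basis one level up, and then keeps a linearly independent subset, so each kept $v=f_\al u$ has $u$ as a parent via a simple arrow; your remark that parent relations are never disturbed later, because each weight-space basis is fixed once, is a point the paper leaves implicit.
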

\begin{proof}
We need to construct a basis where every node $v$ distinct from the highest has a parent node.
Denote by $d_\nu$ the multiplicity of the weight $\nu\in \La(V)$, that is,  the dimension of weight space $V[\nu]$.
We do descending induction on the level of the weight. For $v$ of $\lv(v)=-1$ the highest node is parent because $\wt(v)$ is  multiplicity free.
Suppose we have constructed a required basis in all weight spaces of level $\ell$ and pick a weight $\mu\in \La(V)$ of level $\ell-1$.

Let $\nu_i\in \La(V)$, $i\in I_\ell$, be all weights of level $\ell$ and set $\bt_i=\nu_i-\mu \in \Pi$ to be simple roots
(different for different $\nu_i$).
Let   $\{v_{i,j}\}_{j\in d_{\nu_i}}\subset V[\nu_i]$ be the already constructed basis of level $\ell$, for each $i$. Consider the set of vectors $f_{\bt_i} v_{i,j}$ over all $i\in I_\ell,j\in d_{\nu_i}$
and retain among them  independent. Clearly their span gives all $V[\mu]$ because $V$ is cyclic over $U_q(\g_-)$.
This is a basis we are seeking for: if $v=f_{\bt_i}v_{i,j}\in V[\nu_i]$  is in the chosen set, then $\hat v=v_{i,j}$ is its parent node,
as the arrow $\bt_i$ applied to $\hat v$ is simple.
\end{proof}
\noindent
For a branching quasi-free basis we fix a function $i\mapsto \hat i$ assigning to each  non-maximal node $i$ its parent node  $\hat i$ with
a simple arrow $v_i\stackrel{\bt_i}{\longleftarrow} \hat v_i$.

 Let $\Ru$ denote  a quasitriangular structure or $U_q(\g)$ relative to $\Delta$ and set $\check{\Ru}=q^{-\sum_{i}h_i\tp h_i}\Ru$, where $\{h_i\}_{i=1}^n$ is an orthonormal basis in $\h$.
It is possible to choose $\Ru$ such that $\check{\Ru}\in U_q(\g_+)\tp U_q(\g_-)$ (a completed tensor product).
The fact that the tensor factors are in the nilpotent subalgebras rather than simply Borel subalgebras can be seen from
the computation algorithm discussed in this exposition.
Furthermore, $\check \Ru=1\tp 1\mod J_+\tp J_-$, where $J_\pm$ are the ideals in $U_q(\g_\pm)$ generated by $e_{\pm \al}$, $\al\in \Pi$.

Consider two comultiplications on  $U_q(\g)$ defined on generators by the assignments
$$
\Delta(f_\al)= f_\al\tp 1+q^{-h_\al}\tp f_\al,\quad \Delta(q^{\pm h_\al})=q^{\pm h_\al}\tp q^{\pm h_\al},\quad\Delta(e_\al)= e_\al\tp q^{h_\al}+1\tp e_\al,
$$
$$
\tilde \Delta(f_\al)=f_\al\tp 1 +q^{h_\al}\tp f_\al,\quad \tilde \Delta q^{\pm h_\al}=q^{\pm h_\al}\tp q^{\pm h_\al}, \quad \tilde \Delta(e_\al)=e_\al\tp q^{-h_\al}+1\tp e_\al,
$$
for all $\al \in \Pi$.
The element $\check{\Ru} $ is a Hopf (super)algebra twist that relates the two chosen comultiplications:
\be
\label{int_rel_check_R}
\check{\Ru}\Delta(u)=\tilde \Delta(u)\check{\Ru}, \quad \forall u\in U_q(\g).
\ee
Specifically we need this relation only for the negative simple root vectors:
$$
\check{\Ru}(f_\al\tp 1+q^{-h_\al}\tp f_\al)=(f_\al\tp 1 +q^{h_\al}\tp f_\al)\check{\Ru}, \quad \forall \al\in \Pi.
$$
It is convenient to rewrite it for each ordered pair $j\preceq i$ as
$$
\check{\Ru}(f_\al\tp 1)-(f_\al\tp 1)\check{\Ru}=(q^{h_\al}\tp f_\al)\check{\Ru}-\check{\Ru}(q^{-h_\al}\tp f_\al), \quad \forall \al\in \Pi.
$$
Fix a $U_q(\g)$-module $V$ with representation homomorphism $\pi\colon U_q(\g)\to \End(V)$ and consider the matrices
$$
\Lc=(\pi\tp \id)(\check{\Ru}) \in \End(V)\tp U_q(\g_-).
$$
In a chosen  weight basis of homogeneous vectors, write it as
$$
\Lc=\sum_{j\preceq i} e_{ij}\tp L_{ji}=1\tp 1+\sum_{j\prec i} e_{ij}\tp L_{ji}
$$
with $L_{ij}\in U_q(\g_-)$.
Then the intertwining relation translates to
\be
\label{L-intertwiner}
\sum_{k\prec j} L_{ki}\pi(f_\al)_{kj}(-1)^{|\al|(|k|+|i|)}=(-1)^{|\al|(|i|+|j|)}q^{(\al,\nu_i)} f_\al L_{ji}-L_{ji}q^{-(\al,\nu_j)} f_\al+\sum_{i\prec k}\pi(f_\al)_{ik}L_{jk}.
\ee

Denote by $\End_+(V)$ the linear span $e_{ij}\in \End(V)$ with $i\succ j$. It is an associative subalgebra in $\End(V)$ invariant under left and right $U_q(\h)$-action. Similarly, by $\End_-(V)$ we denote the
subalgebra  spanned by $e_{ij}$ with $i\prec j$. The nilpotent parts of the subalgebras $U_q(\b_\pm)$ are sent to $\End_\pm(V)$ by the representation homomorphism $\pi$.
\begin{definition}
  A linear operator $A\in \End(V)\tp \End(V)$ is called polarized if
$$
A-1\tp 1\in \End_+(V)\tp \End_-(V).
$$
Similarly we call a matrix $\Ac\in \End(V)\tp U_q(\g)$ polarized if $\Ac-1\tp 1\in \End_+(V)\tp U_q(\g_-)\g_-$,
and an element $\Bc\in U_q(\g)\tp U_q(\g)$ polarized if $\Bc-1\tp 1\in U_q(\g_+)\g_+\tp U_q(\g_-)\g_-$.
\end{definition}
The algorithm we suggest for computing $\Lc$ will be arranged as a proof to the following statement.
\begin{lemma}
 The element $\Lc=\sum_{j\preceq i }e_{ij}\tp L_{ji}$ is a unique polarized matrix with entries in   $U_q(\g)$  satisfying (\ref{L-intertwiner}) for all $\al\in \Pi$.
\end{lemma}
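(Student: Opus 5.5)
Existence is already in hand: $\Lc=(\pi\tp\id)(\check\Ru)$ is polarized because $\check\Ru\in 1\tp 1+J_+\tp J_-$ and $\pi$ sends the nilpotent part of $U_q(\b_+)$ to $\End_+(V)$. It satisfies (\ref{L-intertwiner}) because that system is just the matrix form of (\ref{int_rel_check_R}) for $u=f_\al$. So the content of the lemma is uniqueness. We will in fact prove that any polarized solution is determined recursively, up to the normalization $L_{ii}=1$ that polarization imposes, starting from these diagonal entries. We work in the branching quasi-free basis supplied by the Proposition, with the fixed parent map $i\mapsto\hat i$ and simple arrows $v_i\stackrel{\bt_i}{\longleftarrow}\hat v_i$.

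The recursion determines the entries $L_{ji}$, $j\prec i$, by induction on the pair $(\lv(\nu_j),\lv(\nu_i))$: first on $\lv(j)$ descending from $0$, and then, for fixed $j$, on $\lv(i)$ ascending from $\lv(j)$. Fix $j\prec i$ with $j$ not the highest node, which is automatic since $j\prec i$. Take $\al=\bt_j$ and write (\ref{L-intertwiner}) for the pair $(i,\hat j)$ in place of $(i,j)$.

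On the left-hand side the sum runs over $k$ with $\pi(f_\al)_{k\hat j}\ne0$. Because the $\al$-arrow out of $\hat j$ is simple, this sum has exactly one term, namely $k=j$ with a nonzero coefficient $\pi(f_\al)_{j\hat j}$. The left-hand side is therefore a nonzero scalar multiple of the unknown $L_{ji}$, up to sign.

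The right-hand side involves three kinds of terms:
\begin{itemize}
\item $L_{\hat j i}$, which is known since $\lv(\hat j)=\lv(j)+1$; when $\hat j=i$ it equals $1$;
\item $L_{\hat j k}$ with $k\succ i$, which is also known at the higher level of $\hat j$;
\item the products $f_\al L_{\hat j i}$ and $L_{\hat j i}f_\al$.
\end{itemize}
Hence $L_{ji}$ is expressed uniquely through previously determined entries. If $\hat j\not\preceq i$, then $L_{\hat j i}=0$ and only the sum over $k$ survives. The base of the induction is $L_{ii}=1$ together with $L_{ji}=0$ unless $j\preceq i$, both of which follow from polarization and weight considerations. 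Since the recursion never divides by anything other than the nonzero scalars $\pi(f_\al)_{j\hat j}$, two polarized solutions coincide entry by entry.

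The main obstacle is bookkeeping rather than substance. We must check that the recursion really uses only the single equation indexed by $(i,\hat j,\bt_j)$, and that every entry appearing on its right-hand side is strictly earlier in the chosen order. In particular the entries $L_{\hat j k}$ with $k\succ i$ have row index at level $\lv(j)+1$, so they are covered by the outer induction regardless of $k$. We must also track the signs $(-1)^{|\al|(\cdot)}$ so that the coefficient of $L_{ji}$ is visibly nonzero.

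For uniqueness alone, the remaining equations of (\ref{L-intertwiner}) impose no further conditions, because the solution produced by the recursion is forced. Their consistency follows from the existence of $\check\Ru$. This is why the lemma asserts existence and uniqueness together rather than consistency of the recursion on its own.
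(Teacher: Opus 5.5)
Your proof is correct and follows the paper's argument. Both use descending induction on the level of the row index $j$, and apply (\ref{L-intertwiner}) at the parent pair $(i,\hat j)$ with $\al=\bt_j$. Simplicity of the arrow collapses the left-hand sum to the single term $L_{ji}\pi(f_\al)_{j\hat j}$ with $\pi(f_\al)_{j\hat j}\neq 0$, while every right-hand entry has row index $\hat j$ at a higher level and is already known.

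Your inner induction on $\lv(i)$ is superfluous, as you note yourself. Your remark that existence comes from $\check\Ru$ rather than from the recursion matches the paper's comment after the proof.
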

\begin{proof}
First of all observe that $L_{ji}=0$  for $\lv(j)>\lv(i)$ and $L_{ji}=\dt_{ji}$ for  $\lv(j)=\lv(i)$; that is,
the matrix $L_{ji}$ is upper triangular. In particular, $L_{ii}=1$ for $v_i$ a maximal node.
Further we compute $L_{ji}$ by descending induction on the level of $j$.

Let  $\lv(j)\preceq \lv(i)$ and suppose that we have computed all $L_{kl}$ with $\lv(j)<\lv(k)\leqslant \lv(l)$.
 Then the summation in the l.h.s. of (\ref{L-intertwiner}) degenerates to
\be
\label{recurrent formula}
L_{ji}\pi(f_\al)_{j\hat j}(-1)^{|\al|(|j|+|i|)}=(-1)^{|\al|(|i|+|\hat j|)}q^{(\al,\nu_i)} f_\al L_{\hat ji}-L_{\hat ji}q^{-(\al,\nu_{\hat j})} f_\al+\sum_{i\prec k}\pi(f_\al)_{ik}L_{\hat jk},
\ee
where $\hat j$ is the fixed parent node for $j$ and $\al=\bt_j$ is a simple arrow.
Since $\lv(j)<\lv(\hat j)$,  the right hand side has been found by the induction assumption. As $\pi(f_\al)_{j\hat j}\not =0$, this equality gives $L_{ji}$.
\end{proof}
Thus, in order to calculate $L_{ji}$ with $j\prec i$, one should pass through a chosen  parent node $\hat j$. If the module $V$ is branching free,
that can be any preceding node of 1 level higher.

Remark that this algorithm does not prove the existence of $\Lc$, because it makes use of only a part of the intertwinging relations. The existence of $\Lc$ is {\em a priori} given.
Recall that an associative algebra is called residually finite-dimensional (RFD) if the intersection of the kernels of its finite-dimensional representations (that is a Hopf ideal) is zero.
For instance, such is the non-graded reductive quantum group.
\begin{corollary}
 Suppose that $U_q(\g)$ is RFD. Then the element $\check \Ru$ is in $U_q(\g_+)\tp U_q(\g_-)$. It is a unique polarized twist intertwining comultiplications
$\Delta$ and $\tilde \Delta$ restricted to $U_q(\g_-)$.
\end{corollary}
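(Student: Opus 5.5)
We will reduce both assertions to the preceding Lemma, applied to every finite-dimensional module of highest weight, and then use the RFD hypothesis to pass from matrices back to the algebra. The Lemma is a statement about $\Lc=(\pi\tp\id)(\check\Ru)$. Its recursion (\ref{recurrent formula}) produces each $L_{ji}$ as a polynomial in the $f_\al$ applied to entries of higher level. Hence every $L_{ji}$ lies in $U_q(\g_-)$, with no Cartan factors, and $L_{ji}\in U_q(\g_-)\g_-$ for $j\prec i$.

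\emph{Step 1: $\check\Ru\in U_q(\g_+)\tp U_q(\g_-)$.} A priori $\check\Ru$ lies in the completion of $U_q(\b_+)\tp U_q(\b_-)$. Write it in the form $\sum_{\mu} \check\Ru_\mu$, graded by weights $\mu\in\Gamma_+$, with $\check\Ru_\mu\in U_q(\b_+)_\mu\tp U_q(\b_-)_{-\mu}$. For every finite-dimensional $V$, the Lemma shows that $(\pi\tp\id)(\check\Ru)$ has its right legs in $U_q(\g_-)$. We then use the symmetric Lemma for lowest weight modules, where the roles of $U_q(\g_\pm)$ are interchanged, applied to $(\id\tp\pi)(\check\Ru)$; it shows that the left legs lie in $U_q(\g_+)$. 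To combine the two, we choose a basis of $U_q(\b_-)_{-\mu}$ adapted to the triangular decomposition $U_q(\g_-)\tp U_q(\h)$. The component of $\check\Ru_\mu$ along the non-trivial Cartan monomials is then annihilated by $\pi\tp\pi'$ for all finite-dimensional $\pi,\pi'$. Since $U_q(\g)$ is RFD, and each weight component is finite-dimensional, finite-dimensional representations of $U_q(\g)\tp U_q(\g)$ separate the elements of $U_q(\g)\tp U_q(\g)$ of fixed weight. This tensor-product separation is what we must verify. Hence those components vanish, and each $\check\Ru_\mu\in U_q(\g_+)\tp U_q(\g_-)$. Polarization, $\check\Ru-1\tp1\in U_q(\g_+)\g_+\tp U_q(\g_-)\g_-$, follows in the same way from the polarization of every $\Lc$.

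\emph{Step 2: uniqueness.} Let $\Bc$ be another polarized element intertwining $\Delta$ and $\tilde\Delta$ on $U_q(\g_-)$, that is, satisfying the displayed relation for all $f_\al$. For any finite-dimensional highest weight $V$, the matrix $(\pi\tp\id)(\Bc)$ is polarized and satisfies (\ref{L-intertwiner}). By the uniqueness part of the Lemma it equals $(\pi\tp\id)(\check\Ru)$. Every finite-dimensional module is filtered by highest weight subquotients, and RFD uses arbitrary finite-dimensional modules, so we must extend this. One route is to show that the Lemma's uniqueness argument uses only the existence of a weight basis with upper-triangular $\pi(f_\al)$ relative to a suitable parent function. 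Such a basis exists on any finite-dimensional weight module after refining the level by a composition series. The alternative is to observe that the kernels of highest weight finite-dimensional modules already intersect to zero, because semisimplicity holds in the non-graded case. Either way, $(\pi\tp\id)(\Bc-\check\Ru)=0$ for enough $\pi$. Applying the separation argument of Step 1 to each weight component then gives $\Bc=\check\Ru$.

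The main obstacle is the passage from "zero in every finite-dimensional representation of the first leg" to "zero in the completed tensor product". This is where RFD is invoked. It requires the weight grading, so that each component is an honest element of a finite tensor product $U_q(\b_+)_\mu\tp U_q(\b_-)_{-\mu}$. It also requires the elementary fact that if $\sum a_k\tp b_k$ has linearly independent $b_k$ and $\pi(a_k)=0$ for all finite-dimensional $\pi$, then $a_k=0$. I would prove this fact first and then use it uniformly in both steps.
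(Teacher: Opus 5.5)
Your strategy is the paper's: the Lemma, applied in highest weight representations, puts the right leg in $U_q(\g_-)$. The mirror argument with $(\id\tp\pi)$ and the $e_\al$-relations puts the left leg in $U_q(\g_+)$, and separation of points transfers both conclusions to $\check\Ru$. The bookkeeping you add is sound: weight components, splitting off Cartan factors by the triangular decomposition, the tensor-product separation fact, and uniqueness via the Lemma's uniqueness.

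The gap is the bridge from highest weight modules, the only ones the Lemma handles, to arbitrary finite-dimensional modules, the only ones RFD speaks about. Step 1 crosses it silently (``for every finite-dimensional $V$''), and neither route in Step 2 closes it in the graded case.

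Route 1 fails. The recursion needs every node not at a maximal weight of $V$ to have a parent, and a node can have a parent only if it lies in $\sum_\al\pi(f_\al)V$. Consider $V_1\oplus V_2$ with $\la_2\prec\la_1$ (e.g.\ $V_1$ adjoint and $V_2$ trivial), or a non-split extension $0\to V_1\to V\to V_2\to 0$. In both cases $V[\la_2]\not\subset\sum_\al\pi(f_\al)V$. Hence every weight basis, in any order and however refined by a composition series, contains a parentless node $j$ of non-maximal weight $\la_2$. The entries $L_{ji}$ with $\nu_i\succ\la_2$ are then not produced by the recursion at all. The relations at $(j,i)$ only constrain them, and showing they are forced is a new problem that the Lemma does not address.

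Route 2 rests on complete reducibility. That fails for quantum supergroups such as $U_q(\d(2,1;\kappa))$, which is exactly the case at hand.

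The paper's own proof never leaves highest weight modules and tacitly assumes they separate points. You can justify that directly and drop arbitrary finite-dimensional modules altogether. The quasi-free basis proposition and the Lemma hold verbatim for infinite-dimensional highest weight modules, in particular Verma modules $M(\la)$, and these separate points of $U_q(\b_+)$. Indeed, suppose $x=\sum_K a_KK$, with $a_K\in U_q(\g_+)_\mu$ and distinct Cartan monomials $K$, kills every $M(\la)$. Restrict $x$ to $M(\la)[\la-\mu]$, where each $K$ acts by a scalar $K(\la-\mu)$. The Shapovalov pairing $U_q(\g_+)_\mu\times U_q(\g_-)_{-\mu}\to\C$, given by $yz v_\la\in\C v_\la$, is nondegenerate for generic $\la$. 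This gives $\sum_K K(\la-\mu)a_K=0$ for generic $\la$, hence all $a_K=0$ by independence of characters.

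Lowest weight Verma modules do the same for $U_q(\b_-)$, which is what the left-leg argument and its Cartan splitting need. With that in place, your separation fact completes both steps.

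A minor slip: $U_q(\b_\pm)_\mu$ is not finite-dimensional because of $U_q(\h)$; only $U_q(\g_\pm)_\mu$ is, and that is all you actually use.
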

\begin{proof}
The matrix  $(\pi\tp \id)(\check \Ru)$  is in $\End(V)\tp U_q(\b_-)$ for all representations $\pi$ on  modules $V$
of highest weight, therefore the right tensor leg of
  $\check \Ru$ is in $U_q(\g_-)$. Similarly one can prove that the left tensor leg is in $U_q(\g_+)$, by considering the matrix
  $(\id\tp \pi)(\check \Ru)$ and  employing the intertwining relation (\ref{int_rel_check_R}) for the generators of $U_q(\g_+)$.
\end{proof}
Denote by $P\in \End(V)\tp \End(V)$  the graded flip of tensor factors,  $P(v\tp w)=(-1)^{|v||w|}w\tp v$,  and set $B=(\pi\tp \pi)(q^{\sum_i h_i\tp h_i})$.
\begin{corollary}
Let $V$ be a module of highest weight. Then the image $R$ of the universal R-matrix is the only operator in  $B+\End_+(V)\tp \End_-(V)$ such that $PR$ is $U_q(\g)$-invariant.
\end{corollary}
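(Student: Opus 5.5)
Write $R=(\pi\tp\pi)(\Ru)$ and $\check R=(\id\tp\pi)(\Lc)=(\pi\tp\pi)(\check\Ru)$, so that $R=B\check R$. The proof has two halves: first show that $R$ satisfies both conditions, then show that any operator satisfying them must equal $R$.

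For the first half, the inclusion $R\in B+\End_+(V)\tp\End_-(V)$ comes directly from the polarization of $\Lc$. By the Lemma, $\Lc-1\tp1\in\End_+(V)\tp U_q(\g_-)\g_-$, and $\pi$ maps $U_q(\g_-)\g_-$ into $\End_-(V)$. So $\check R-1\tp1\in\End_+(V)\tp\End_-(V)$. Now $B$ is diagonal in the weight basis, and $\End_\pm(V)$ are stable under left and right multiplication by diagonal operators. Hence $R-B=B(\check R-1\tp1)$ lies in $\End_+(V)\tp\End_-(V)$. The invariance of $PR$ is the standard consequence of quasitriangularity, $\Ru\Delta(u)=\Delta^{op}(u)\Ru$, where $\Delta^{op}$ is the graded opposite comultiplication. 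Applying $\pi\tp\pi$ and composing with the graded flip $P$ gives $PR\,(\pi\tp\pi)\Delta(u)=(\pi\tp\pi)\Delta(u)\,PR$. In other words, $PR$ commutes with the action of $U_q(\g)$ on $V\tp V$.

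For uniqueness, let $R'\in B+\End_+(V)\tp\End_-(V)$ be such that $PR'$ is invariant. Set $\check R'=B^{-1}R'$. This is polarized in the sense of the Definition, with $\check R'-1\tp1\in\End_+(V)\tp\End_-(V)$. Invariance of $PR'$ is equivalent to $R'\Delta(u)=\Delta^{op}(u)R'$ on $V\tp V$. To convert this into the intertwining relation (\ref{int_rel_check_R}), conjugate by $B$. The point is that $B^{-1}\Delta^{op}(u)B=\tilde\Delta(u)$ on generators $u=f_\al$, which one checks using $q^{\sum h_i\tp h_i}(f_\al\tp1)q^{-\sum h_i\tp h_i}=f_\al\tp q^{-h_\al}$ and its mirror image. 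The same check has to be carried out for $e_\al$; this is a direct weight computation. It follows that $\check R'\Delta(f_\al)=\tilde\Delta(f_\al)\check R'$ holds in $\End(V)\tp\End(V)$.

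Next, write $\check R'=\sum_{j\preceq i}e_{ij}\tp L'_{ji}$ with $L'_{ji}\in\End(V)$ and $L'_{ii}=1$. The relation just obtained is exactly (\ref{L-intertwiner}), with $f_\al$ in the right leg replaced by $\pi(f_\al)$. The recursion in the proof of the Lemma used only these relations, together with the facts that $L'$ is unitriangular in level and that $\pi(f_\al)_{j\hat j}\ne0$. It therefore runs verbatim in the algebra $\End(V)$ in place of $U_q(\g_-)$. It determines every $L'_{ji}$ uniquely by descending induction on $\lv(j)$, with $L'_{ii}=1$ as the base. The same recursion is satisfied by the entries of $\check R$, so $\check R'=\check R$, and hence $R'=R$.

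The main obstacle I expect is the careful sign bookkeeping. First, one must verify that the conjugation by $B$ really turns graded $\Delta^{op}$ into $\tilde\Delta$, including the $(-1)$ factors from the graded flip $P$. Second, one must check that the replacement of $U_q(\g_-)$ by $\pi(U_q(\g_-))$ does not break the recursion. This second point is harmless, since the recursion only divides by the scalar $\pi(f_\al)_{j\hat j}$ and never needs $U_q(\g_-)$ to be free.
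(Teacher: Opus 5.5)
Your proof is correct and takes the same route the paper intends. The paper states this corollary without proof, as a consequence of the Lemma: conjugation by $B$ turns $\Delta^{op}$ into $\tilde\Delta$, and the Lemma's parent-node recursion, run with the right tensor leg in $\End(V)$ instead of $U_q(\g_-)$, determines $B^{-1}R'$ from its top row $L'_{1i}=\delta_{1i}$. You also make explicit the point the paper leaves implicit: the recursion still works in $\End(V)$, because simplicity of the $\al$-arrow out of $\hat j$ collapses the left-hand sum to a single term, and the step then only divides by the nonzero scalar $\pi(f_\al)_{j\hat j}$.
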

This implies a well known fact that  $\Ru$ is a unique quasitriangular structure on $U_q(\g)$ lying in $ U_q(\b_+)\tp U_q(\b_-)$.
Indeed, if $\Ru$ is such a structure, then the  image of $\check \Ru$  in $\End_+(V)\tp \End_-(V)$ is polarized for all modules $V$ of highest weight.

\section{The quantum supergroup $U_q\bigl(\d(2,1;\kappa)\bigr)$}
In this part we apply the algorithm outlined in the previous section to compute the R-matrix
for adjoint representation of  the quantum supergroup $U_q\bigl(\d(2,1;\kappa)\bigr)$, see \cite{FSS}. By that we mean the finite dimensional irreducible module
whose highest weight is the maximal root (the quantized adjoint module of the Lie superalgebra). Recall that, for generic $\kappa$, it is the module of
minimal dimension 17. We consider four non-isomorphic triangular polarizarions of  $U_q\bigl(\d(2,1;\kappa)\bigr)$, construct the modules for each of them and then
evaluate the R-matrices.

As we mentioned, the adjoint module is branching free, therefore all arrows are simple.  A particular
realization of the algorithm depends on a choice of  paths connecting each pair  of ordered nodes.
The results of  calculation done with of Maple are given in Section \ref{Sec_R-matrix}.
\subsection{Polarizations I-III}
\label{Sec_Pol-I-III}
The Lie superalgebra $\d(2,1;\kappa)$ has four non-isomorphic triangular decomposition. Three of them have two even simple roots while all the three
simple roots of the 4-th are odd.
It is convenient to consider the first three polarizations together, because they share the same topology of the adjoint Hasse diagram.
Their Gram matrix  $B_{ij}=(\al_i,\al_j)$, $i,j=1,2,3$, of the simple roots reads
$$
G
=
\left(
\begin{array}{ccc}
 - 2b& b  &  0\\
  b & 0 & a  \\
  0 &  a & -2a
\end{array}
\right).
$$
Specializing $(a,b)=(-\kappa,-1)$, $(a,b)=\bigl(1+\kappa,-1\bigr)$, $(a,b)=\bigl(1+\kappa,-\kappa\bigr)$,
one obtains the Gram  matrices for polarization I, II, and III, respectively. In what follows, notation $t=q^a$, $s=q^b$ is used.
Also, $2_c$  means $[2]_c$, where $c=q,s,t$.

The minimal representation of $\g$ for generic $\kappa$ of dimension $17$    is supported on the adjoint module, which we denote by $V$.
Its  highest weight is $\nu_1=\al+2\bt+\gm$.
 We explicate the Hasse diagram of $V$ regarded as a $U_q(\g_-)$ module in Figure \ref{Hasse1} below. The double circled nodes  stand for  odd vectors.
The weights of the nodes are readily figured out of the diagrams. In particular, the nodes with numbers $8,9,10$ carry zero weight.
The weights of the nodes on the right are positive roots, while of the nodes on the left are negative roots.
The non-zero matrix entries of the operators $\pi(x)$, where $x$ is a simple root vector, are given in the tables below.
So long the chosen basis has no out-branching, $\pi_{j,i}(x)=c\in \C^\times $ means $\pi(x)v_{i}=cv_j$.

The Hasse diagram corresponding to the structure of the $U_q(\g_+)$-module on $V$ is obtained
by the mirror reflection relative the vertical axis passing through the three nodes of zero weight.

\begin{figure}[H]
\begin{center}
\begin{picture}(420,180)
\put(117,158){$\scriptscriptstyle 11$}\put(199,158){$\scriptscriptstyle 8$}\put(278,158){$\scriptscriptstyle 5$}

\put(77,118){$\scriptscriptstyle 14$}\put(318,118){$\scriptscriptstyle 3$}
\put(-14,78){$\scriptscriptstyle 17$}\put(37,78){$\scriptscriptstyle 16$}\put(117,78){$\scriptscriptstyle 12$}\put(199,78){$\scriptscriptstyle 9$}\put(278,78){$\scriptscriptstyle 6$}\put(358,78){$\scriptscriptstyle 2$}\put(409,78){$\scriptscriptstyle 1$}
\put(77,38){$\scriptscriptstyle 15$}\put(318,38){$\scriptscriptstyle 4$}

\put(117,-2){$\scriptscriptstyle 13$}\put(197,-2){$\scriptscriptstyle 10$}\put(278,-2){$\scriptscriptstyle 7$}

\put(115,154){{\vector(-1,-1){29}}}
\put(75,114){{\vector(-1,-1){29}}}\put(315,114){{\vector(-1,-1){29}}}
\put(115,74){{\vector(-1,-1){29}}}\put(355,74){{\vector(-1,-1){29}}}
\put(315,34){{\vector(-1,-1){29}}}\put(315,126){{\vector(-1,1){29}}}
\put(115,86){{\vector(-1,1){29}}}\put(355,86){{\vector(-1,1){29}}}
\put(75,46){{\vector(-1,1){29}}}\put(315,46){{\vector(-1,1){29}}}
\put(115,6){{\vector(-1,1){29}}}

\put(192,160){{\vector(-1,0){64}}}\put(272,160){{\vector(-1,0){64}}}\put(195,152){{\vector(-1,-1){67}}}
\put(32,80){{\vector(-1,0){34}}}\put(272,80){{\vector(-1,0){64}}}\put(402,80){{\vector(-1,0){34}}}\put(195,72){{\vector(-1,-1){67}}}\put(195,88){{\vector(-1,1){67}}}
\put(192,0){{\vector(-1,0){64}}}\put(272,0){{\vector(-1,0){64}}}\put(195,8){{\vector(-1,1){67}}}

\put(120,160){\circle{12}}\put(200,160){\circle{12}}\put(280,160){\circle{12}}
\put(80,120){\circle{12}}\put(80,120){\circle{13.5}} \put(320,120){\circle{12}}\put(320,120){\circle{13.5}}
\put(-10,80){\circle{12}}\put(40,80){\circle{12}}\put(40,80){\circle{13.5}}\put(120,80){\circle{12}}\put(120,80){\circle{13.5}}\put(200,80){\circle{12}}\put(280,80){\circle{12}}\put(280,80){\circle{13.5}}\put(360,80){\circle{12}}\put(360,80){\circle{13.5}}\put(410,80){\circle{12}}
\put(80,40){\circle{12}}\put(80,40){\circle{13.5}}\put(320,40){\circle{12}}\put(320,40){\circle{13.5}}
\put(120,0){\circle{12}}\put(200,0){\circle{12}}\put(280,0){\circle{12}}

\put(160,163){$\al$}\put(240,163){$\al$}
\put(90,143){$\bt$}\put(145,143){$\al$}\put(305,143){$\bt$}
\put(50,103){$\gm$}\put(134,103){$\bt$}\put(294,103){$\al$}\put(345,103){$\gm$}
\put(15,83){$\bt$}\put(385,83){$\bt$}\put(240,83){$\bt$}
\put(50,53){$\al$}\put(93,63){$\gm$}\put(134,48){$\bt$}\put(294,50){$\gm$}\put(342,53){$\al$}
\put(95,13){$\bt$}\put(305,13){$\bt$}
\put(135,23){$\gm$}
\put(160,3){$\gm$}\put(240,3){$\gm$}
\end{picture}
\end{center}
  \caption{Hasse diagram of $V$ as a $U(\g_-)$-module. Polarization I-III.}
\label{Hasse1}
\end{figure}
\begin{table}[H]
  \centering
\begin{tabular}{||c||c|c|c|c|c|c|c|c|c||}
  \hline
  $\sst(i,j)$& $\sst (4, 2)$ & $\sst(6, 3)$ & $\sst(8, 5)$ & $\sst(11, 8)$ & $\sst(11, 9)$ & $\sst(14, 12)$ & $\sst(16, 15)$&&\\ \hline
 $(f_\al)_{ij}$& 1 & 1 & 1 & 1 & $[b]_q$ & 1& 1&& \\
  \hline\hline
  $\sst(i,j)$& $\sst(2,4)$ & $\sst(3,6)$ & $\sst(5, 8)$ & $\sst(8,11)$ & $\sst(5, 9)$ & $\sst(12,14)$ & $\sst(15, 16)$&&\\ \hline
$(e_\al)_{ij}$&$-[b]_q$ &$-[b]_q$ &$-[2b]_q$ & $-[2b]_q$ & $-[2b]_q[b]_q$ & $-[b]_q$ & $-[b]_q$ &&\\\hline\hline
$\sst(i,j)$&$\sst(3,2)$ & $\sst(6, 4)$ & $\sst(10,7)$ & $\sst(13,9)$ & $\sst(13,10)$ & $\sst(15, 12)$ & $\sst(16, 14)$&&\\ \hline
$ (f_\gm)_{ij}$&  $1$ & $1$ & $1$ & $[a]_q$ & 1 & $1$ & $1$ &&\\
  \hline\hline
$\sst(i,j)$&$\sst(2,3)$ & $\sst(4,6)$ & $\sst(7,10)$ & $\sst(7,9)$ & $\sst(10,13)$ & $\sst(12, 15)$ & $\sst(14, 16)$&&\\ \hline
$(e_\gm)_{ij}$&  $-[a]_q$ & $-[a]_q$ & $-[2a]_q $& $-[2a]_q[a]_q$ & $-[2a]_q$ & $-[a]_q$ & $-[a]_q$ &&\\
  \hline\hline
 $\sst(i,j)$& $\sst(2, 1)$ & $\sst(5, 3)$ & $\sst(7, 4)$ & $\sst(9, 6)$ & $\sst(12, 8)$ & $\sst(12, 10)$ & $\sst(14, 11)$& $\sst(15, 13)$& $\sst(17, 16)$\\ \hline
$(f_\bt)_{ij}$&  $1$ & $[b]_{q^2}$ & $[a]_{q^2}$ & $\frac{1}{2_q}$ &$\frac{1}{2_s}$ & $\frac{1}{2_{t}}$ & $1$ & $1$ & $1$ \\
  \hline\hline
 $\sst(i,j)$& $\sst(1,2$ & $\sst(3,5)$ & $\sst(4,7)$ & $\sst(6,8)$ & $\sst(6,10)$ & $\sst(9,12)$ & $\sst(11,14)$& $\sst(13, 15)$& $\sst(16, 17)$\\ \hline
$ (e_\bt)_{ij}$& $[b+a]_q$& $\frac{2_q}{2_s}$ & $\frac{2_q}{2_t}$ & $\frac{2_q}{2_s}$ & $\frac{2_q}{2_t}$ &$-1$ & $-[b]_q$ & $-[a]_q$ &  $-[b+a]_q$ \\
  \hline\hline
    \end{tabular}
\caption{Adjoint representation. Polarization I-III.}
\end{table}
The generators of the  quantum supergroup $U_q(\g)$ satisfy the following commutation relations:
$$
q^{h_\bt}e_{\pm \al}q^{-h_\bt}=q^{\pm (\al,\bt)}e_{\pm \al}, \quad [e_\al,e_{-\bt}]=\dt_{\al,\bt}[h_\al]_{q}, \quad \forall \al, \bt \in \Pi.
$$
Let $\bt\in \Pi$ be odd and $\al, \gm\in \Pi$ be even, $\al\not =\gm$.
The Serre relations are
$$
e_{\pm\bt}^2=0, \quad [e_{\pm\al},e_{\pm \gm}]=0.
$$
$$
[e_{\pm\al},[e_{\pm\al},e_{\pm\bt}]_{s}]_{ s^{-1}}=0,\quad [e_{\pm\gm},[e_{\pm\gm},e_{\pm\bt}]_{t}]_{t^{-1}}=0,\quad
$$
 The matrix elements of
the generators are listed in Table 1.

\subsection{Polarization  IV}
\label{Sec_Pol-IV}
In this section we study the adjoint representation of the quantum supergroup $U_q\bigl(\d(2,1;\kappa)\bigr)$ constructed over polarization IV.
All simple roots are odd. Let us denote them by $\al=\al_1,\bt=\al_2,\gm=\al_3$.  Their Gram matrix is
$$
G=
\left(
\begin{array}{ccc}
  0& a  & 1 \\
  a & 0 & -b \\
  1 & -b & 0
\end{array}
\right).
,
$$
where $a=\kappa$, $b=1+\kappa$.
The commutation relation between the elements of the Cartan subalgebra in  $U_q(\g)$ and the Chevalley generators are
$$
q^{h_\bt}e_{\pm \al}q^{-h_\bt}=q^{\pm (\al,\bt)}e_{\pm \al}, \quad [e_\al,e_{-\bt}]=\dt_{\al,\bt}[h_\al]_{q}, \quad \forall \al, \bt \in \Pi.
$$
The simple root vectors are nilpotent,
$$
e_{\pm \al}^2=0,\quad e_{\pm \bt}^2=0,\quad e_{\pm \gm}^2=0.
$$
Positive and negative root vectors satisfy   Serre relations
$$
e_{\pm\al} e_{\pm\bt} e_{\pm\gm}-[a]_qe_{\pm\al} e_{\pm\gm} e_{\pm\bt} -[b]_q e_{\pm\gm} e_{\pm\al} e_{\pm\bt} = e_{\pm\gm} e_{\pm\bt} e_{\pm\al} -[a]_qe_{\pm\bt} e_{\pm\gm} e_{\pm\al} -[b]_q e_{\pm\bt} e_{\pm\al} e_{\pm\gm}.
$$
The Hasse diagram of the adjoint representation for $U_q(\b_-)$ of type  IV is depicted in Figure \ref{Hasse4}. The
diagram for $U_q(\b_+)$ is obtained from it by reflection  with respect to the vertical axis passing through the nodes
of zero weights.
The matrix elements of the generators are presented in Table 2.
\begin{figure}[H]
\begin{center}
\begin{picture}(420,130)

\put(137,118){$\scriptscriptstyle 11$}\put(199,118){$\scriptscriptstyle 8$}\put(258,118){$\scriptscriptstyle 5$}

\put(77,118){$\scriptscriptstyle 14$}\put(318,118){$\scriptscriptstyle 2$}
\put(16,58){$\scriptscriptstyle 17$}\put(77,58){$\scriptscriptstyle 15$}\put(137,58){$\scriptscriptstyle 12$}\put(199,58){$\scriptscriptstyle 9$}\put(258,58){$\scriptscriptstyle 6$}\put(318,58){$\scriptscriptstyle 3$}\put(379,58){$\scriptscriptstyle 1$}
\put(77,-2){$\scriptscriptstyle 16$}\put(318,-2){$\scriptscriptstyle 4$}
\put(137,-2){$\scriptscriptstyle 13$}\put(197,-2){$\scriptscriptstyle 10$}\put(258,-2){$\scriptscriptstyle 7$}

\put(80,120){\circle{12}}\put(140,120){\circle{12}}\put(140,120){\circle{13.5}} \put(320,120){\circle{12}}
\put(200,120){\circle{12}}\put(260,120){\circle{12}}\put(260,120){\circle{13.5}}
\put(20,60){\circle{12}}\put(20,60){\circle{13.5}}\put(80,60){\circle{13.5}}\put(140,60){\circle{12}}\put(140,60){\circle{13.5}}\put(200,60){\circle{12}}\put(260,60){\circle{12}}\put(260,60){\circle{13.5}}\put(320,60){\circle{12}}\put(380,60){\circle{13.5}}\put(380,60){\circle{12}}
\put(80,0){\circle{12}}\put(140,0){\circle{13.5}}\put(140,0){\circle{12}}\put(200,0){\circle{12}}\put(260,0){\circle{12}}\put(260,0){\circle{13.5}}\put(320,0){\circle{12}}

\put(374.5,66.5){\vector(-1,1){48}}
\put(372,60){\vector(-1,0){45}}
\put(374.5,53.5){\vector(-1,-1){48}}

\put(312,120){\vector(-1,0){43}}
\put(314.5,113.5){\vector(-1,-1){48}}
\put(314.5,66.5){\vector(-1,1){48}}

\put(314.5,53.5){\vector(-1,-1){48}}
\put(312,0){\vector(-1,0){43}}
\put(314.5,6.5){\vector(-1,1){48}}

\put(252,120){\vector(-1,0){43}}
\put(252,60){\vector(-1,0){43}}
\put(252,0){\vector(-1,0){43}}

\put(194.5,6.5){\vector(-1,1){48}}
\put(196,7.5){\vector(-1,2){52}}

\put(194.5,113.5){\vector(-1,-1){48}}
\put(196,112.5){\vector(-1,-2){52}}

\put(194.5,66.5){\vector(-1,1){48}}
\put(194.5,53.5){\vector(-1,-1){48}}

\put(134.5,113.5){\vector(-1,-1){48}}
\put(74.5,113.5){\vector(-1,-1){48}}
\put(134.5,66.5){\vector(-1,1){48}}
\put(134.5,53.5){\vector(-1,-1){48}}
\put(134.5,6.5){\vector(-1,1){48}}
\put(74.5,6.5){\vector(-1,1){48}}

\put(131,120){\vector(-1,0){43}}
\put(71,60){\vector(-1,0){43}}
\put(131,0){\vector(-1,0){43}}

\put(312,130){$\scriptscriptstyle \al+\bt$}
\put(312,70){$\scriptscriptstyle \al+\gm$}
\put(312,10){$\scriptscriptstyle \bt+\gm$}
\put(355,25){$\al$}\put(348,65){$\bt$}\put(355,90){$\gm$}
\put(40,25){$\al$}\put(48,65){$\bt$}\put(40,90){$\gm$}
\put(110,125){$\bt$}\put(110,5){$\bt$}
\put(290,125){$\bt$}\put(290,5){$\bt$}

\put(270,37){$\gm$}\put(270,17){$\al$}
\put(270,77){$\al$}\put(270,97){$\gm$}

\put(90,37){$\al$}\put(90,17){$\gm$}
\put(90,77){$\gm$}\put(90,97){$\al$}

\put(155,110){$\al$}\put(140,95){$\al$}

\put(155,5){$\gm$}\put(140,20){$\gm$}
\put(145,40){$\bt$}\put(145,70){$\bt$}

\put(230,125){$\al$}\put(230,65){$\bt$}\put(230,5){$\gm$}
\end{picture}
\end{center}
  \caption{Hasse diagram of $V$ as a $U(\g_-)$-module, $\b_-$ of type IV}
\label{Hasse4}
\end{figure}
\begin{table}[H]
  \centering
\begin{tabular}{||c||c|c|c|c|c|c|c|c|c||}
  \hline
  $\sst(i,j)$& $\sst(4,1)$ & $\sst(6,2)$ & $\sst(7,3)$ & $\sst(8,5)$ & $\sst(11,9)$ & $\sst(11,10)$ & $\sst(14, 12)$&$\sst(15,13)$&$\sst(17,16)$\\ \hline
 $(f_\al)_{i,j}$& 1 & 1 & $-\frac{1}{[b]_q} $& 1 & $1$ &  $-\frac{[b]_{q}}{[a]_{q}^2}$ & $1$ &$-\frac{[b]_{q}}{[a]_{q}}$&$\frac{[b]_{q}^2}{[a]_{q}^2}$\\
  \hline\hline
  $\sst(i,j)$& $\sst(1,4)$ & $\sst(2,6)$ & $\sst(3,7)$ & $\sst(5, 9)$ & $\sst(5,10)$ & $\sst(8,11)$ & $\sst(12, 14)$&$\sst(13,15)$&$\sst(16,17)$\\ \hline
$(e_\al)_{ij}$&$[b]_q$ &$[a]_q$ &$-[b]_q$ & $-[a]_q$ & $\frac{[b]_q}{[a]_q}$ & $[a]_q$ & $-[a]_q$&$\frac{[a]_q}{[b]_q}$&$-\frac{[a]_q^2}{[b]_q}$ \\\hline\hline
$\sst(i,j)$&$\sst(3,1)$ & $\sst(5,2)$ & $\sst(7,4)$ & $\sst(9,6)$ & $\sst(12,8)$ & $\sst(12,10)$ & $\sst(14,11)$& $\sst(16,13)$& $\sst(17,15)$\\ \hline
$(f_\bt)_{ij}$&  $1$ & $1$ & $1$ & $1$ & $1$ & $\frac{[b]_q^2}{[a]_q^2}$ & $1$ &$1$&$\frac{1}{[a]_q}$\\
  \hline\hline
$\sst(i,j)$&$\sst(1,3)$ & $\sst(2,5)$ & $\sst(4,7)$ & $\sst(6,8)$ & $\sst(6,10)$ & $\sst(9,12)$ & $\sst(11,14)$&$\sst(13,16)$&$\sst(15,17)$\\ \hline
$(e_\bt)_{ij}$&  $-1$ & $[a]_q$ & $-[b]_q$ & $-[a]_q$ & $-\frac{[b]_q^2}{[a]_q}$ & $[a]_q$ & $-[a]_q$ &$[b]_q$&$[a]_q$\\
  \hline\hline
 $\sst(i,j)$& $\sst(2, 1)$ & $\sst(5, 3)$ & $\sst(6, 4)$ & $\sst(10,7)$ & $\sst(13, 8)$ & $\sst(13, 9)$ & $\sst(15, 11)$& $\sst(16, 12)$& $\sst(17, 14)$\\ \hline
$(f_\gm)_{ij}$&  $1$ & $\frac{1}{[a]_q}$ & $-\frac{[b]_q}{[a]_q}$ & $1$ &$-\frac{1}{[b]_q}$ & $1$ & $1$ & $-\frac{[a]_q}{[b]_q}$ & $1$ \\
  \hline\hline
 $\sst(i,j)$& $\sst(1,2)$ & $\sst(3,5)$ & $\sst(4,6)$ & $\sst(7,8)$ & $\sst(7,9)$ & $\sst(10,13)$ & $\sst(11,15)$& $\sst(12, 16)$& $\sst(14, 17)$\\ \hline
$(e_\gm)_{ij}$&  $-[a]_q$ & $[a]_q$ & $[a]_q$ & $\frac{[a]_q}{[b]_q}$ &$-[a]_q$ & $[a]_q$ & $-1$ & $-\frac{[b]_q^2}{[a]_q}$ & $[a]_q$ \\
  \hline\hline
\end{tabular}
\caption{Adjoint representation. Porarization IV.}
\end{table}
\section{R-matrix for $U_q\bigl(\d(2,1;\kappa)\bigr)$}
\label{Sec_R-matrix}
In this section, we present the R-matrices   in the adjoint representation,  for all the four quantum groups $U_q\bigl(\d(2,1;\kappa)\bigr)$.
We express it as the product
$$
R=HL,
$$
where
$
H=(\pi\tp \pi)(q^{\sum_{l,k=1}^3G_{lk}^{-1} h_l\tp h_k})=\sum_{i,j}^{17}q^{(\nu_i,\nu_j)}e_{ii}\tp e_{jj}
$
and
 $L=\sum_{i,j=1}^{17}e_{ij}\tp \ell_{ji}$ is the truncated operator $(\id \tp \pi)(\Lc)$. The entries $\ell_{ji}=\pi(L_{ji})\in \End(V)$ are given in the subsequent sections.

The relations of the parameters $a$ and $b$ to $\kappa$ is  as specified  the Sections \ref{Sec_Pol-I-III} and \ref{Sec_Pol-IV}.
Besides, we set  $t=q^a$, $s=q^b$, and introduce  a shortcut
$$
\lceil x  \rceil = x-x^{-1},
$$
in order to compactify the formulas.
We also use the notation
$$
[x,y]_c =xy-c yx, \quad \{x,y\}_c =xy+c yx, \quad c\in \C,
$$
for any entities $x,y$.
\subsection{Truncated L-operator in polarizations I-III}
This section contains formulas for matrix entries of the $L$-operator in polarizations I-III.
We denote them by $\ell_{i,j}$ regarding as matrices from $\End_-(V)$. In order to shorten
the formulas, we express each $\ell_{i,j}$  recursively through matrix entries of higher level.
Their coincidence  with the entries  delivered by the algorithm was
also checked by Maple.
$$\ell_{2,1}=-\lceil st \rceil f_\bt;\quad
\ell_{3,2}=-\lceil t\rceil f_\gm;\quad
\ell_{4,2}=-\lceil s\rceil f_\al;\quad
\ell_{5,3}=-\frac{\lceil q^2\rceil}{2_s}f_\bt;$$
$$\ell_{6,3}=-\lceil s\rceil f_\al;\quad
\ell_{6,4}=-\lceil t\rceil f_\gm;\quad
\ell_{7,4}=-\frac{\lceil q^2\rceil}{2_t}f_\bt;\quad
\ell_{8,5}=-\lceil s^2\rceil f_\al;$$
$$\ell_{9,5}=-\frac{\lceil s^2\rceil\lceil s\rceil}{\lceil q\rceil}f_\al;\quad
\ell_{8,6}=-\frac{\lceil q^2\rceil}{2_s}f_\bt;\quad
\ell_{10,6}=-\frac{\lceil q^2\rceil}{2_t}f_\bt;\quad
\ell_{9,7}=-\frac{\lceil t^2\rceil\lceil t\rceil}{\lceil q\rceil}f_\gm;$$
$$\ell_{10,7}=-\lceil t^2\rceil f_\gm;\quad
\ell_{11,8}=-\lceil s^2\rceil f_\al;\quad
\ell_{12,9}=\lceil q\rceil f_\bt;\quad
\ell_{13,10}=-\lceil t^2\rceil f_\gm;$$
$$\ell_{14,11}=\lceil s\rceil f_\bt;\quad
\ell_{14,12}=-\lceil s\rceil f_\al;\quad
\ell_{15,12}=-\lceil t\rceil f_\gm;\quad
\ell_{15,13}=\lceil t\rceil f_\bt;$$
$$\ell_{16,14}=-\lceil t\rceil f_\gm;\quad
\ell_{16,15}=-\lceil s\rceil f_\al;\quad
\ell_{17,16}=\lceil st\rceil f_\bt;
$$
$$\ell_{3,1}=[f_\gm,\ell_{2,1}]_t;\quad
\ell_{4,1}=[f_\al,\ell_{2,1}]_s;\quad
\ell_{5,2}=\frac{\lceil q^2\rceil}{2_s\lceil t\rceil}[f_\bt,\ell_{3,2}]_t;\quad
\ell_{6,2}=-\lceil s\rceil\ell_{3,2}f_\al;$$
$$\ell_{7,2}=\frac{\lceil q^2\rceil}{2_t\lceil s\rceil}[f_\bt,\ell_{4,2}]_s;\quad
\ell_{8,3}=s^{-1}[f_\al,\ell_{5,3}]_{s^3};\quad
\ell_{9,3}=-2_q s[f_\bt,\ell_{6,3}]_{s^{-1}};\quad
\ell_{10,3}=\frac{\lceil q^2\rceil}{2_t\lceil s\rceil}[f_\bt,\ell_{6,3}]_s;$$
$$\ell_{8,4}=\frac{\lceil q^2\rceil}{2_s\lceil t\rceil}[f_\bt,\ell_{6,4}]_t;\quad
\ell_{9,4}=-2_q t[f_\bt,\ell_{6,4}]_{t^{-1}};\quad
\ell_{10,4}=t^{-1}[f_\gm,\ell_{7,4}]_{t^3};\quad
\ell_{11,5}=-\lceil s\rceil s^{-1}\ell_{8,5}f_\al;$$
$$\ell_{12,5}=-[f_\bt,\ell_{8,5}]_s;\quad
\ell_{11,6}=2_s[f_\al,\ell_{8,6}]_s;\quad
\ell_{13,6}=2_t[f_\gm,\ell_{10,6}]_t;\quad
\ell_{12,7}=-[f_\bt,\ell_{10,7}]_t;$$
$$\ell_{13,7}=-\frac{\lceil q\rceil}{t}f_\gm\ell_{9,7};\quad
\ell_{14,8}=-[f_\bt,\ell_{11,8}]_s;\quad
\ell_{14,9}=[f_\al,\ell_{12,9}]_s;\quad
\ell_{15,9}=[f_\gm,\ell_{12,9}]_t;$$
$$\ell_{15,10}=-[f_\bt,\ell_{13,10}]_t;\quad
\ell_{16,11}=[f_\gm,\ell_{14,11}]_t;\quad
\ell_{16,12}=-\lceil s\rceil\ell_{15,12}f_\al;\quad
\ell_{16,13}=[f_\al,\ell_{15,13}]_s;$$
$$\ell_{17,14}=-\frac{\lceil st\rceil}{\lceil t\rceil}[f_\bt,\ell_{16,14}]_t;\quad
\ell_{17,15}=-\frac{\lceil st\rceil}{\lceil s\rceil}[f_\bt,\ell_{16,15}]_s;$$

$$\ell_{5,1}=-t\frac{\lceil q^2\rceil}{2_s}f_\bt\ell_{3,1};\quad
\ell_{6,1}=[f_\al,\ell_{3,1}]_s;\quad
\ell_{7,1}=-s\frac{\lceil q^2\rceil}{2_t}f_\bt\ell_{4,1};$$
$$\ell_{8,2}=s^{-1}[f_\al,\ell_{5,2}]_{s^3};\quad
\ell_{9,2}=2_q\bigl([\ell_{6,2},f_\bt]_{st}-\ell_{6,1}\bigr);\quad
\ell_{10,2}=t^{-1}[f_\gm,\ell_{6,2}]_{t^3};$$
$$\ell_{11,3}=s^{-1}[f_\al,\ell_{8,3}]_s;\quad
\ell_{12,3}=s\lceil q\rceil f_\bt\ell_{9,3};\quad
\ell_{13,3}=2_t[f_\gm,\ell_{10,3}]_t;\quad
\ell_{11,4}=2_s[f_\al,\ell_{8,4}]_s;$$
$$\ell_{12,4}=\lceil t^2\rceil t^2 f_\bt\ell_{10,4};\quad
\ell_{13,4}=t^{-1}[f_\gm,\ell_{10,4}]_t;\quad
\ell_{14,5}=-\lceil s\rceil\ell_{12,5}f_\al;\quad
\ell_{15,5}=[f_\gm,\ell_{12,5}]_t;$$
$$\ell_{14,6}=s\lceil s\rceil f_\bt\ell_{11,6};\quad
\ell_{15,6}=t\lceil t\rceil f_\bt\ell_{13,6};\quad
\ell_{14,7}=[f_\al,\ell_{12,7}]_s;\quad
\ell_{15,7}=-\lceil t\rceil\ell_{12,7}f_\gm;$$
$$\ell_{16,8}=[f_\gm,\ell_{14,8}]_t;\quad
\ell_{16,9}=[f_\al,\ell_{15,9}]_s;\quad
\ell_{16,10}=[f_\al,\ell_{15,10}]_s;\quad
\ell_{17,11}=t\lceil st\rceil f_\bt\ell_{16,11};$$
$$\ell_{17,12}=-s[f_\al,\ell_{17,14}]_{s^{-1}};\quad
\ell_{17,13}=s\lceil st\rceil f_\bt\ell_{16,13};
$$
$$\ell_{8,1}=[f_\al,\ell_{5,1}]_{s^2};\quad
\ell_{9,1}=-2_q st\{f_\bt,\ell_{6,1}\}_{(st)^{-1}};\quad
\ell_{10,1}=[f_\gm,\ell_{7,1}]_{t^2};\quad
\ell_{11,2}=s^{-1}[f_\al,\ell_{8,2}]_s;$$
$$\ell_{12,2}=-\frac{\lceil q\rceil}{\lceil st\rceil}\{f_\bt,\ell_{9,2}\}_{st};\quad
\ell_{13,2}=t^{-1}[f_\gm,\ell_{10,2}]_t;\quad
\ell_{14,3}=-\lceil s\rceil\ell_{12,3}f_\al;$$
$$\ell_{15,3}=-s\{f_\bt,\ell_{13,3}\}_{\frac{t}{s}};\quad
\ell_{14,4}=-t\{f_\bt,\ell_{11,4}\}_{\frac{s}{t}};\quad
\ell_{15,4}=-\lceil t\rceil f_\gm\ell_{12,4};\quad
\ell_{16,5}=-\lceil s\rceil\ell_{15,5}f_\al;$$
$$\ell_{16,6}=s[f_\al,\ell_{15,6}]+\ell_{15,3};\quad
\ell_{16,7}=[f_\al,\ell_{15,7}]_s;\quad
\ell_{17,8}=-\{f_\bt,\ell_{16,8}\}_{st};$$
$$\ell_{17,9}=\frac{\ell_{16,6}}{2_q}-\{f_\bt,\ell_{16,9}\}_{st};\quad
\ell_{17,10}=-\{f_\bt,\ell_{16,10}\}_{st};$$
$$\ell_{11,1}=[f_\al,\ell_{8,1}];\quad
\ell_{12,1}=-2_s st[f_\bt,\ell_{8,1}]_{(st)^{-1}};\quad
\ell_{13,1}=[f_\gm,\ell_{10,1}];\quad
\ell_{14,2}=s^{-1}[f_\al,\ell_{12,2}]_{s^2};$$
$$\ell_{15,2}=t^{-1}[f_\gm,\ell_{12,2}]_{t^2};\quad
\ell_{16,3}=s^{-1}[f_\al,\ell_{15,3}]_{s^2};\quad
\ell_{16,4}=t^{-1}[f_\gm,\ell_{14,4}]_{t^2};$$
$$\ell_{17,5}=[\ell_{17,8},f_\al];\quad
\ell_{17,6}=st\lceil st\rceil f_\bt\ell_{16,6};\quad
\ell_{17,7}=[\ell_{17,10},f_\gm];
$$
$$\ell_{14,1}=[f_\al,\ell_{12,1}]_s;\quad
\ell_{15,1}=[f_\gm,\ell_{12,1}]_t;\quad
\ell_{16,2}=t^{-1}[f_\gm,\ell_{14,2}]_{t^2};$$
$$\ell_{17,3}=-s[f_\bt,\ell_{16,3}]_t;\quad
\ell_{17,4}=-t[f_\bt,\ell_{16,4}]_s;
$$
$$\ell_{16,1}=[f_\al,\ell_{15,1}]_s;\quad
\ell_{17,2}=[\ell_{17,3},f_\gm]_t;$$
$$\ell_{17,1}=-ts\{f_\bt,\ell_{16,1}\}.$$

\subsection{Truncated L-operator in polarization IV}
This subsection is using the same notational convention as the previous. It gives
entries of the $L$-operator of in polarization IV.
$$\ell_{2,1}=\lceil t\rceil f_\gm;\quad
\ell_{3,1}=\lceil q\rceil f_\bt;\quad
\ell_{4,1}=-\lceil qt\rceil f_\al;\quad
\ell_{5,2}=-\lceil t\rceil f_\bt;$$
$$\ell_{6,2}=-\lceil t\rceil f_\al;\quad
\ell_{5,3}=-\lceil t\rceil f_\gm;\quad
\ell_{7,3}=\lceil qt\rceil f_\al;\quad
\ell_{6,4}=-\lceil t\rceil f_\gm;$$
$$\ell_{7,4}=\lceil qt\rceil f_\bt;\quad
\ell_{9,5}=\lceil t\rceil f_\al;\quad
\ell_{10,5}=-\frac{\lceil qt\rceil\lceil q\rceil}{\lceil t\rceil}f_\al;\quad
\ell_{8,6}=\lceil t\rceil f_\bt;$$
$$\ell_{10,6}=\frac{\lceil qt\rceil^2}{\lceil t\rceil}f_\bt;\quad
\ell_{8,7}=-\frac{\lceil t\rceil\lceil q\rceil}{\lceil qt\rceil}f_\gm;\quad
\ell_{9,7}=\lceil t\rceil f_\gm;\quad
\ell_{11,8}=-\lceil t\rceil f_\al;$$
$$\ell_{12,9}=-\lceil t\rceil f_\bt;\quad
\ell_{13,10}=-\lceil t\rceil f_\gm;\quad
\ell_{14,11}=\lceil t\rceil f_\bt;\quad
\ell_{15,11}=\lceil q\rceil f_\gm;$$
$$\ell_{14,12}=\lceil t\rceil f_\al;\quad
\ell_{16,12}=\frac{\lceil qt\rceil^2}{\lceil t\rceil}f_\gm;\quad
\ell_{15,13}=-\frac{\lceil t\rceil\lceil q\rceil}{\lceil qt\rceil}f_\al;\quad
\ell_{16,13}=-\lceil qt\rceil f_\bt;$$
$$\ell_{17,14}=-\lceil t\rceil f_\gm;\quad
\ell_{17,15}=-\lceil t\rceil f_\bt;\quad
\ell_{17,16}=\frac{\lceil t\rceil^2}{\lceil qt\rceil}f_\al;$$
$$\ell_{5,1}=-q^{-1}\{f_\bt,\ell_{2,1}\}_{\frac{q}{t}};\quad
\ell_{6,1}=-qt\{f_\al,\ell_{2,1}\}_{\frac{1}{qt^2}};\quad
\ell_{7,1}=-q^{-1}\{f_\bt,\ell_{4,1}\}_{q^2t};$$
$$\ell_{8,2}=-\{f_\bt,\ell_{6,2}\}_t;\quad
\ell_{9,2}=-\{f_\al,\ell_{5,2}\}_t;\quad
\ell_{10,2}=-\frac{\lceil qt\rceil}{q\lceil t\rceil}\{f_\bt,\ell_{6,2}\}_{q^2t};$$
$$\ell_{8,3}=-q\{f_\al,\ell_{5,3}\}_{q^{-1}};\quad
\ell_{9,3}=qt\{f_\al,\ell_{5,3}\}_{\frac{1}{qt^2}};\quad
\ell_{10,3}=-q\{f_\gm,\ell_{7,3}\}_{q^{-1}};$$
$$\ell_{8,4}=q^{-1}\{f_\bt,\ell_{6,4}\}_{\frac{q}{t}};\quad
\ell_{9,4}=-(qt)^{-1}\{f_\bt,\ell_{6,4}\}_{qt};\quad
\ell_{10,4}=-(qt)^{-1}\{f_\gm,\ell_{7,4}\}_{qt};$$
$$\ell_{12,5}=\{f_\bt,\ell_{9,5}\}_t;\quad
\ell_{13,5}=-\frac{\lceil qt\rceil}{\lceil t\rceil}\{f_\gm,\ell_{9,5}\}_q;\quad
\ell_{11,6}=\{f_\al,\ell_{8,6}\}_t;\quad
\ell_{13,6}=-\frac{\lceil qt\rceil}{\lceil t\rceil}\{f_\gm,\ell_{8,6}\}_{(qt)^{-1}};$$
$$\ell_{11,7}=\frac{\lceil t\rceil}{\lceil q\rceil}\{f_\al,\ell_{8,7}\}_q;\quad
\ell_{12,7}=\frac{\lceil t\rceil}{\lceil q\rceil}\{f_\bt,\ell_{8,7}\}_{(qt)^{-1}};\quad
\ell_{14,8}=-\{f_\bt,\ell_{11,8}\}_t;\quad
\ell_{15,8}=-\{f_\gm,\ell_{11,8}\}_q;$$
$$\ell_{14,9}=-\{f_\al,\ell_{12,9}\}_t;\quad
\ell_{16,9}=\frac{\lceil qt\rceil}{\lceil t\rceil}\{f_\gm,\ell_{12,9}\}_{(qt)^{-1}};$$
$$
\ell_{15,10}=\frac{\lceil t\rceil}{\lceil qt\rceil}\{f_\al,\ell_{13,10}\}_q;\quad
\ell_{16,10}=-\{f_\bt,\ell_{13,10}\}_{(qt)^{-1}};$$
$$\ell_{17,11}=-q^{-1}\{f_\gm,\ell_{14,11}\}_{\frac{q}{t}};\quad
\ell_{17,12}=-qt\{f_\gm,\ell_{14,12}\}_{\frac{1}{qt^2}};\quad
\ell_{17,13}=-\frac{\lceil t\rceil\,qt}{\lceil q\rceil}\{f_\bt,\ell_{15,13}\}_{\frac{1}{q^2t}};$$
$$\ell_{8,1}=-qt[f_\al,\ell_{5,1}]_{\frac{1}{qt}};\quad
\ell_{9,1}=-q^{-1}[f_\bt,\ell_{6,1}]_q;\quad
\ell_{10,1}=-t^{-1}[f_\gm,\ell_{7,1}]_t;$$
$$\ell_{11,2}=-t\lceil t\rceil f_\al\ell_{8,2};\quad
\ell_{12,2}=-\lceil t\rceil f_\bt\ell_{8,2};\quad
\ell_{13,2}=\frac{\lceil t\rceil}{q\lceil q\rceil}[f_\bt,\ell_{13,5}]_q;$$
$$\ell_{11,3}=-\lceil t\rceil\ell_{8,3}f_\al;\quad
\ell_{12,3}=t^{-1}[f_\gm,\ell_{12,5}]_t;\quad
\ell_{13,3}=-q\lceil t\rceil f_\gm\ell_{10,3};$$
$$\ell_{11,4}=-[\ell_{11,6},f_\gm]_{t^{-1}};\quad
\ell_{12,4}=-\lceil t\rceil\ell_{9,4}f_\bt;\quad
\ell_{13,4}=\lceil qt\rceil f_\gm\ell_{9,4};$$
$$\ell_{14,5}=t\lceil t\rceil f_\al\ell_{12,5};\quad
\ell_{15,5}=-\frac{\lceil t\rceil\lceil q\rceil}{\lceil qt\rceil}\ell_{13,5}f_\al;\quad
\ell_{16,5}=-\frac{\lceil qt\rceil}{\lceil q\rceil}[f_\bt,\ell_{13,5}]_{q^{-1}};$$
$$\ell_{14,6}=t\lceil t\rceil f_\bt\ell_{11,6};\quad
\ell_{15,6}=\frac{\lceil q\rceil}{\lceil t\rceil}[f_\gm,\ell_{11,6}]_{t^{-1}};\quad
\ell_{16,6}=-(qt)^{-1}\lceil qt\rceil f_\bt\ell_{13,6};$$
$$\ell_{14,7}=\frac{\lceil t\rceil}{\lceil q\rceil}[f_\bt,\ell_{11,7}]_{q^{-1}};\quad
\ell_{15,7}=q\lceil q\rceil f_\gm\ell_{11,7};\quad
\ell_{16,7}=\frac{\lceil qt\rceil^2}{qt\lceil t\rceil}f_\gm\ell_{12,7};$$
$$\ell_{17,8}=-[f_\gm,\ell_{14,8}]_{t^{-1}};\quad
\ell_{17,9}=-[f_\gm,\ell_{14,9}]_{t^{-1}};\quad
\ell_{17,10}=-\frac{\lceil t\rceil}{\lceil q\rceil}[f_\bt,\ell_{15,10}]_{q^{-1}};$$
$$\ell_{11,1}=-qt\{f_\al,\ell_{9,1}\}_{\frac{1}{qt}};\quad
\ell_{12,1}=-q^{-1}\{f_\bt,\ell_{8,1}\}_q;\quad
\ell_{13,1}=-t^{-1}\{f_\gm,\ell_{9,1}\}_t;$$
$$\ell_{14,2}=-2t f_\al\ell_{12,2};\quad
\ell_{15,2}=\frac{\lceil t\rceil\lceil q\rceil}{\lceil qt\rceil}\ell_{13,2}f_\al;\quad
\ell_{16,2}=-q^{-1}\lceil qt\rceil f_\bt\ell_{13,2};$$
$$\ell_{14,3}=-\lceil t\rceil\ell_{12,3}f_\al;\quad
\ell_{15,3}=-2q\ell_{11,3}f_\gm;\quad
\ell_{16,3}=\frac{\lceil qt\rceil^2}{t\lceil t\rceil}f_\gm\ell_{12,3};$$
$$\ell_{14,4}=-\lceil t\rceil\ell_{11,4}f_\bt;\quad
\ell_{15,4}=-\lceil q\rceil\ell_{11,4}f_\gm;\quad
\ell_{16,4}=-\frac{2}{qt}\ell_{13,4}f_\bt;$$
$$\ell_{17,5}=\{f_\al,\ell_{17,8}\}_{qt};\quad
\ell_{17,6}=-\frac{\lceil t\rceil}{\lceil q\rceil}\{f_\bt,\ell_{15,6}\}_{q^{-1}};\quad
\ell_{17,7}=-\{f_\gm,\ell_{14,7}\}_{t^{-1}};$$
$$\ell_{14,1}=-qt[f_\al,\ell_{12,1}]_{q^{-1}};\quad
\ell_{15,1}=-t^{-1}[f_\gm,\ell_{11,1}]_{qt};\quad
\ell_{16,1}=-q^{-1}[f_\bt,\ell_{13,1}]_{t^{-1}};$$
$$\ell_{17,2}=-\frac{\lceil t\rceil\,t}{\lceil q\rceil}[f_\bt,\ell_{15,2}]_{\frac{1}{qt}};\quad
\ell_{17,3}=-q[f_\gm,\ell_{14,3}]_{\frac{1}{qt}};\quad
\ell_{17,4}=-(qt)^{-1}[f_\gm,\ell_{14,4}]_q;$$
$$\ell_{17,1}=-\frac{\lceil t\rceil}{q\lceil q\rceil}\{f_\bt,\ell_{15,1}\}.$$

\vspace{20pt}

\noindent
\underline{\large \bf Acknowledgement}

\vspace{10pt}
\noindent
This work is done at the Center of Pure Mathematics MIPT. It is financially supported by Russian Science Foundation grant 26-11-00115.

 \end{document}